
\documentclass{amsart}

\newtheorem{Theorem}{Theorem}[section]
\newtheorem{Corollary}[Theorem]{Corollary}
\newtheorem{Lemma}[Theorem]{Lemma}

\theoremstyle{remark}

\begin{document}

\title[Exceptional surgeries on alternating knots]%
{All exceptional surgeries on alternating knots are integral surgeries}

\author{Kazuhiro Ichihara}

\address{School of Mathematics Education,
Nara University of Education,
Takabatake-cho, Nara, 630-8528, Japan}

\email{ichihara@nara-edu.ac.jp}

\urladdr{http://mailsrv.nara-edu.ac.jp/~ichihara/index.html}

\keywords{exceptional surgery, Seifert fibered surgery, integral surgery, alternating knot, essential lamination, alternating knot, Montesinos knot}

\subjclass[2000]{Primary 57M50; Secondary 57M25}

\begin{abstract}
We show that all non-trivial exceptional surgeries on 
hyperbolic alternating knots in the 3-sphere are integral surgeries. 
\end{abstract}

\maketitle

\section{Introduction}

By a \textit{Dehn surgery} on a knot 
(i.e., an embedded circle in a 3-manifold), 
we mean the following operation 
to create a new $3$-manifold 
from a given one and a given knot; 
remove an open tubular neighborhood of the knot, 
and glue a solid torus back. 
Here and hereafter in this paper, 
all 3-manifolds are assumed to be orientable. 

The well-known Hyperbolic Dehn Surgery Theorem 
due to Thurston \cite[Theorem 5.8.2]{Th} says that, 
each hyperbolic knot 
(i.e., a knot with hyperbolic complement) 
admits only finitely many Dehn surgeries yielding non-hyperbolic manifolds. 
In view of this, such finitely many exceptions 
are called \textit{exceptional surgeries}. 

In this paper, we consider exceptional surgery on 
hyperbolic \textit{alternating knots} in the 3-sphere $S^3$. 
A knot in $S^3$ is called alternating if 
it admits a diagram with alternatively arranged 
over-crossings and under-crossings running along it. 
Note that Menasco showed in \cite{M} that 
an alternating knot in $S^3$ is hyperbolic unless it is a $(2,p)$-torus knot; 
that is, the knot isotoped to the $(2,p)$-curve 
on the standard embedded torus in $S^3$.

Here let us recall fundamental terminologies about Dehn surgery. 
See \cite{R} in details for example. 
As usual, by a \textit{slope}, we mean the isotopy class of 
a non-trivial unoriented simple closed curve on a torus. 
Consider the slope on the peripheral torus of a knot $K$ 
which is represented by the curve 
identified with the meridian of the attached solid torus via the surgery. 
Then we can see that Dehn surgery on $K$ is characterized by the slope, 
which we call the \textit{surgery slope}. 
When $K$ is a knot in $S^3$, 
by using the standard meridian-longitude system, 
slopes on the peripheral torus are parametrized by 
rational numbers with $1/0$. 
For example, the meridian of $K$ corresponds to $1/0$ and the longitude to $0$. 
By the \textit{trivial} Dehn surgery on $K$ in $S^3$, 
we mean the Dehn surgery on $K$ along the meridional slope $1/0$. 
Thus it yields $S^3$ again, which is obviously exceptional, when $K$ is hyperbolic. 
We say that a Dehn surgery on $K$ in $S^3$ is \textit{integral} 
if it is along an integral slope. 
This means that the curve representing the surgery slope 
runs longitudinally once. 

Then our main theorem is;

\begin{Theorem}\label{Thm}
On a hyperbolic alternating knot in the $3$-sphere, 
all non-trivial exceptional surgeries are integral. 
\end{Theorem}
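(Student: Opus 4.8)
The plan is to invoke the Geometrization-based trichotomy for closed orientable $3$-manifolds: a non-hyperbolic one is either reducible, or toroidal, or a (small) Seifert fibered space, where the last class is understood to include lens spaces and all manifolds with finite fundamental group. Writing the surgery slope as $p/q$, a non-integral surgery means $|q|\ge 2$, and I must show that none of the three degenerate possibilities arises for such a slope on a hyperbolic alternating knot $K$; the trivial slope $1/0$ is excluded by hypothesis.

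The first ingredient is an essential lamination in the exterior $M$ of $K$. Since $K$ is hyperbolic, Menasco's theorem guarantees that $K$ is not a $(2,p)$-torus knot, so the construction of Delman and Roberts applies: the exterior of a non-torus alternating knot carries an essential lamination that is \emph{persistent}, in the sense that it remains essential after every Dehn filling along a slope $p/q$ with $|q|\ge 2$ and gives rise to a \emph{genuine} lamination in the filled manifold. Granting this, two classical consequences dispose of two of the three cases. By the theorem of Gabai and Oertel, a closed $3$-manifold containing an essential lamination is irreducible and its universal cover is Euclidean $3$-space; this rules out reducible fillings outright and, because the fundamental group is then infinite, rules out lens spaces and every filling with finite fundamental group. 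By the work of Gabai and Kazez, together with Brittenham's description of essential laminations in Seifert fibered spaces, a closed manifold carrying a genuine lamination cannot be Seifert fibered. Hence, for $|q|\ge 2$, the filling $K(p/q)$ is neither reducible nor Seifert fibered.

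The remaining possibility, that $K(p/q)$ is toroidal, is the heart of the matter, since a genuine lamination does not by itself obstruct an essential torus. Here I would argue through boundary slopes. As $K$ is hyperbolic, $M$ contains no essential torus, so any essential torus $T$ in $K(p/q)$ must meet the core of the filling solid torus; then $T\cap M$ is an essential punctured surface in $M$ all of whose boundary components have slope $p/q$, so that $p/q$ is a boundary slope of $K$. It therefore suffices to show that no non-integral slope can arise in this way. One route is to combine the general fact that a toroidal surgery slope on a hyperbolic knot in $S^3$ satisfies $|q|\le 2$ with a direct analysis of the half-integral case $|q|=2$: the knots admitting half-integral toroidal surgeries are exactly the Eudave-Mu\~noz knots, and one checks that an alternating knot is never of this type. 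A second, more self-contained route is to bound the toroidal boundary slopes directly from the alternating diagram, using the essential checkerboard surfaces and, in the Montesinos case, the Hatcher--Oertel classification of boundary slopes, so as to confine the candidate slopes to the integers.

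I expect the toroidal case to be the main obstacle. The lamination machinery disposes cleanly and uniformly of the reducible, lens-space, and Seifert fibered possibilities, but excluding \emph{non-integral} toroidal surgeries is precisely where the combinatorial structure of alternating (and Montesinos) knots must be brought to bear; controlling the relevant boundary slopes, or equivalently separating alternating knots from the Eudave-Mu\~noz family, is the step most likely to require genuine work.
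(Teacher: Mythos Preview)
Your strategy is genuinely different from the paper's. The paper never splits by manifold type; it splits by the twist number $t(D)$ of a reduced alternating diagram. For $t(D)>4$, Lackenby's combinatorial-length estimate shows every non-integral filling is hyperbolic directly, with no trichotomy. For $t(D)\le4$, a short enumeration forces $K$ to be arborescent, and then case-by-case results of Brittenham--Wu (two-bridge), Delman together with Brittenham's two-annulus criterion (length-$3$ Montesinos), and Wu (type II/III arborescent) each yield hyperbolicity of all non-integral fillings outright.

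Your outline has two genuine gaps. First, Delman--Roberts prove only that their lamination remains \emph{essential} after every non-trivial filling; they do not prove it is \emph{genuine}, and you supply no argument for genuineness when $|q|\ge2$. Essentiality alone does not exclude small Seifert fibered fillings, since such manifolds can carry essential (horizontal) laminations. When the paper does use laminations---only in the length-$3$ Montesinos case---it appeals not to persistence or genuineness but to a specific structural feature of Delman's construction (two disjoint non-parallel meridional annuli in the lamination complement) combined with Brittenham's criterion, and this is what forces non-integral fillings to be hyperbolic. That is a strictly stronger input than what you cite.

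Second, your toroidal step is either circular or unexecuted. The Gordon--Luecke route requires knowing independently that no Eudave-Mu\~noz knot is alternating, but the paper deduces precisely that statement \emph{from} Theorem~\ref{Thm}; absent a separate proof, invoking it here begs the question. The boundary-slope route via checkerboard surfaces and Hatcher--Oertel is only gestured at. Because the paper's methods (Lackenby's estimate, Brittenham's criterion, Wu's theorems) deliver hyperbolicity rather than merely atoroidality, it never has to isolate the toroidal case; your decomposition forces you to, and that is where the argument currently breaks.
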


In general, it is conjectured that, 
on a hyperbolic knot in $S^3$, 
all non-trivial exceptional surgeries are integral or half-integral. 
See \cite[Problem 1.77, (A) Conjectures: (3)]{K}. 
Only known example of hyperbolic knots in $S^3$ 
admitting non-integral exceptional surgeries are 
the knots given by Eudave-Mu\~{n}oz in \cite{E}. 
Also see Subsection \ref{32}. 
Thus Theorem \ref{Thm} implies that 
the knots given by Eudave-Mu\~{n}oz in \cite{E} are all non-alternating.

Also, as an immediate corollary, we have the following; 

\begin{Corollary}\label{Cor}
A hyperbolic alternating knot in the $3$-sphere 
admits at most $10$ exceptional surgeries. 
\end{Corollary}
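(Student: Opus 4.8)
The plan is to deduce the bound by combining Theorem \ref{Thm} with the known universal bound on the distance between exceptional slopes, reducing everything to an elementary count of integers. Recall first that for two slopes $\gamma_1 = p_1/q_1$ and $\gamma_2 = p_2/q_2$ on the peripheral torus, their \emph{distance} $\Delta(\gamma_1,\gamma_2)$, the geometric intersection number, equals $|p_1 q_2 - p_2 q_1|$. In particular, two integral slopes $m$ and $n$ satisfy $\Delta(m,n) = |m-n|$, while each integral slope is at distance $1$ from the meridian $1/0$.

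The key external input I would invoke is the theorem, assembled from the extensive body of work on exceptional Dehn surgery, that on a hyperbolic knot in $S^3$ any two exceptional surgery slopes lie at distance at most $8$. I treat this as a black box rather than reproving it.

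Granting this, the argument runs as follows. Let $K$ be a hyperbolic alternating knot and let $\mathcal{E}$ denote the set of its exceptional surgery slopes. By Theorem \ref{Thm}, every slope in $\mathcal{E}$ except the meridian $1/0$ is an integer. Let $n_{\min}$ and $n_{\max}$ be the smallest and largest such integers; the distance bound gives $n_{\max} - n_{\min} = \Delta(n_{\min}, n_{\max}) \le 8$, so all the non-trivial exceptional slopes lie among the integers $n_{\min}, n_{\min}+1, \dots, n_{\min}+8$, of which there are at most $9$. Adjoining the trivial surgery along $1/0$, which is itself exceptional since $K$ is hyperbolic, yields $|\mathcal{E}| \le 9 + 1 = 10$.

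In this scheme essentially all the content is carried by Theorem \ref{Thm}, the integrality statement, together with the deep but external distance bound; the remaining counting is routine, and the only point requiring care is the bookkeeping of the trivial surgery, which must be counted separately from the integral slopes. It is worth recording that the resulting bound is sharp: the figure-eight knot is alternating and realizes exactly the nine integral exceptional slopes $-4,-3,\dots,3,4$ together with the trivial one, for a total of $10$.
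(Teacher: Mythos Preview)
Your argument is logically sound and structurally identical to the paper's: use Theorem~\ref{Thm} to reduce to integral slopes, invoke an external bound to cap the number of such slopes at $9$, then add the meridian. The paper, however, does not appeal to a universal distance bound; it quotes \cite[Theorem~1.1]{I}, which directly asserts that any hyperbolic knot in $S^3$ has at most nine \emph{integral} exceptional surgeries. That result predates and is considerably lighter than the black box you invoke.

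Your black box deserves a caveat. The statement that any two exceptional slopes on a hyperbolic knot satisfy $\Delta\le 8$ was \emph{not} available from the pre-existing case-by-case literature you allude to: the pairs involving small Seifert fillings (Seifert--Seifert and Seifert--toroidal) were not settled with that bound. The uniform $\Delta\le 8$ is precisely the theorem of Lackenby and Meyerhoff \cite{LM}. But \cite{LM} already proves, for \emph{every} one-cusped hyperbolic $3$-manifold, that there are at most ten exceptional fillings---so if you are willing to cite it, the corollary follows with no reference to alternating knots or to Theorem~\ref{Thm} at all. The paper says as much just after the corollary. In short: your proof is correct, but the auxiliary input you chose subsumes the conclusion, whereas the paper's choice of \cite{I} keeps Theorem~\ref{Thm} doing genuine work. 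Your closing remark on the figure-eight knot realizing the bound is correct and a welcome addition.
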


\begin{proof}
Let $K$ be a hyperbolic alternating knot in the $3$-sphere. 
Then, by Theorem~\ref{Thm}, 
all non-trivial exceptional surgeries are integral. 
On the other hand, as an immediate corollary to \cite[Theorem 1.1]{I}, 
it is shown that; 
on any hyperbolic knot in $S^3$, 
there are at most $9$ integral exceptional surgeries. 
Thus, together with the trivial surgery, 
$K$ admits at most $10$ exceptional surgeries. 
\end{proof}

This concerns the famous Gordon conjecture; 
there exist at most 10 exceptional surgeries on each hyperbolic knot. 
See \cite[Problem 1.77]{K}. 
Previously, the sharpest known bound was 12, 
which is obtained as a corollary of the so-called ``6-theorem'' 
given by Agol \cite[Theorem 8.1]{A} and Lackenby \cite{L} independently. 
Recently, in \cite{A2}, Agol announced that 
there are at most finitely many hyperbolic knots 
which admit more than eight exceptional surgeries. 
And, very recently, in \cite{LM}, 
Lackenby and Meyerhoff announced the affirmative answer to this conjecture 
by a combination of new geometric techniques and a rigorous computer-assisted calculation.
It should be mentioned that 
Corollary \ref{Cor} is worth little after Lackenby and Meyerhoff \cite{LM}, 
but they do not show the integrality of non-trivial exceptional surgeries.

\section*{Acknowledgments}

The author was partially supported by 
Grant-in-Aid for Young Scientists (B), No. 18740038, 
Ministry of Education,Culture,Sports,Science and Technology, Japan.
He would like to thank Shigeru Mizushima and Tetsuya Abe 
for their comments on the arguments about Lemma~\ref{lem4}. 
He also thanks to the referee for his/her careful reading.

\section{Proof of Theorem~\ref{Thm}}

We start with recalling a classification of exceptional surgeries. 
As a consequence of 
the famous Geometrization Conjecture, 
raised by Thurston in \cite[section 6, question 1]{Th2}, 
and established by recent Perelman's works \cite{P1, P2, P3}, 
all closed orientable $3$-manifolds are classified as; 
reducible (i.e., containing 2-spheres not bounding 3-balls), 
toroidal (i.e., containing incompressible tori), 
Seifert fibered (i.e., foliated by circles), 
or hyperbolic (i.e., admitting a complete Riemannian metric with constant sectional curvature $-1$). 
See \cite{S} for a survey. 
Thus exceptional surgeries are also divided into three types; 
reducible (i.e., yielding a reducible manifold), 
toroidal (i.e., yielding a toroidal manifold), or 
Seifert fibered (i.e., yielding a Seifert fibered manifold).

We first show the following lemma. 

\begin{Lemma}\label{lem1}
If a hyperbolic alternating knot $K$ has 
a connected prime alternating diagram $D$ satisfying $t(D) > 4$, 
then all non-trivial exceptional surgeries are integral. 
\end{Lemma}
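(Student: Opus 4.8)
The plan is to argue by contraposition: I will show that every non-trivial, non-integral surgery yields a hyperbolic manifold, and hence is not exceptional. Writing a surgery slope as $p/q$ with $\gcd(p,q)=1$, the hypothesis ``non-trivial and non-integral'' means exactly $|q|\geq 2$, i.e. the slope has distance $\Delta(p/q,\,1/0)=|q|\geq 2$ from the meridian. By the classification of exceptional surgeries recalled above, it is enough to prove that for every such slope the surgered manifold $E(K)(p/q)$ is neither reducible, nor Seifert fibered, nor toroidal. I would rule out the reducible and Seifert fibered cases (the latter including all lens-space and $S^3$ fillings) using the presence of a suitable essential lamination, and treat the toroidal case by a separate slope argument; the role of the hypothesis $t(D)>4$ is precisely to guarantee a rich enough supply of laminations coming from the twist regions of $D$.

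First I would construct from the prime alternating diagram $D$ a persistently laminar branched surface in the exterior $E(K)$, following the branched-surface constructions of Delman and of Delman--Roberts, in which the twist regions of an alternating diagram serve as the local building blocks. The condition $t(D)>4$ guarantees that $D$ is complicated enough for such a construction to apply and, crucially, that the resulting branched surface has its degeneracy concentrated at the meridian: it remains laminar after $p/q$-filling whenever $|q|\geq 2$, that is, for exactly the non-integral slopes, while integral slopes ($|q|=1$) are allowed to degenerate. (The diagrams with few twist regions, such as many small Montesinos knots, are exactly those postponed to the remaining lemmas.) By the theorem of Gabai and Oertel, a closed manifold carrying an essential lamination is irreducible, has infinite fundamental group, and has universal cover $\mathbf{R}^3$; this already eliminates reducible surgeries as well as lens-space and $S^3$ surgeries along every slope with $|q|\geq 2$.

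To rule out the Seifert fibered case I would upgrade these laminations to genuine ones. For $|q|\geq 2$ the boundary leaves of the lamination meet a meridian disk of the filling solid torus in at least two points, so that disk cannot cap a leaf off, and this forces the branching/contact structure characteristic of a genuine lamination. By Brittenham's theorem an essential lamination in a Seifert fibered space is isotopic to a vertical or horizontal one, neither of which is genuine, so no slope with $|q|\geq 2$ can produce a Seifert fibered manifold. For the toroidal case I would invoke the theorem of Gordon and Luecke that a non-integral toroidal surgery slope must be half-integral, so that $|q|=2$; such slopes lie in the persistence range above, and I would exclude them by showing that the complementary regions of the constructed laminations contain no essential torus. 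Assembling the three exclusions, $E(K)(p/q)$ is hyperbolic for every non-integral $p/q$, so no such surgery is exceptional.

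The main obstacle is the Seifert fibered case, and specifically the passage from mere persistence to genuineness. Since Seifert fibered spaces do carry non-genuine essential laminations, and integral surgeries on alternating knots really can be Seifert fibered, I must verify that the laminations surviving for $|q|\geq 2$ are never of vertical or horizontal type --- and it is here that the precise combinatorics of the branched surface built from $D$, and the bound $t(D)>4$, have to be exploited. This lamination analysis is the delicate heart of the argument; the acknowledgement to Mizushima and Abe concerning Lemma~\ref{lem4} suggests that controlling exactly these estimates is where the real work lies.
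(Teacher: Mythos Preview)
Your approach via essential laminations is not the paper's route and, as you yourself concede, leaves the Seifert fibered case as an unresolved ``main obstacle.'' The paper instead invokes Lackenby's combinatorial version of the $2\pi$-theorem: from a connected prime alternating diagram $D$ one builds the canonical angled spine on $E(K)$, and Lackenby shows (\cite[Theorem~4.9]{L}) that with respect to this structure the combinatorial length of any slope $p/q$ is at least $|q|\cdot t(D)\cdot \pi/4$. His \cite[Theorem~5.4]{L} (together with Geometrization) then says that any slope of combinatorial length greater than $2\pi$ fills to a hyperbolic manifold. The threshold $t(D)>4$ is thus nothing more than the numerical inequality $|q|\cdot t(D)\cdot \pi/4 > 2\pi$ for $|q|\ge 2$; it has no intrinsic connection to the existence or persistence of laminations.

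In fact the lamination machinery you sketch is what the paper deploys in Lemma~\ref{lem4}, for the \emph{small} twist-number case (alternating Montesinos knots of length three), not here; your intuition about where the methods apply is inverted. Your claim that $t(D)>4$ is what makes the Delman--Roberts construction yield a branched surface with ``degeneracy concentrated at the meridian'' is unsupported: those constructions are not governed by twist number in that way, and the delicate cases are precisely certain small Montesinos knots with few twists. Even granting a persistent lamination for $|q|\ge 2$, two further gaps remain. First, upgrading to genuineness requires more than the observation that a meridian disk meets boundary leaves twice; you have not exhibited a complementary region that fails to be an $I$-bundle after filling. Second, an essential (even genuine) lamination does not by itself exclude an incompressible torus in the filled manifold, so ``showing that the complementary regions contain no essential torus'' is an additional, unproved assertion. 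Lackenby's argument bypasses all of this by producing hyperbolicity directly.
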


The proof of this lemma 
heavily depends upon the result obtained by Lackenby in \cite{L}. 
Thus we prepare some terminologies defined and used there. 
Actually the following are simplified versions of the original definitions. 
See \cite{L} for full details. 
Let $D$ be a connected alternating diagram of a knot in $S^3$, 
which we view as a $4$-regular graph embedded in $S^2$, 
equipped with ``under-over" crossing information.
Then $D$ is called \textit{prime} 
if each simple closed curve in $S^2$ intersecting $D$ transversely 
in two points divides $S^2$ into two discs, 
one of which contains no crossings of $D$. 
The \textit{twist number} of the diagram $D$, 
denoted by $t(D)$, is defined as 
the number of \textit{twists}, which are either; 
maximal connected collections of bigon regions in $D$ arranged in a row 
or isolated crossings adjacent to no bigon regions.

\begin{proof}[Proof of Lemma~\ref{lem1}]
Suppose that a hyperbolic alternating knot $K$ has 
a connected prime alternating diagram $D$. 
Then the exterior of $K$ can be given 
the \textit{canonical angled spine} arising from $D$. 
See \cite[Section 4]{L} for the definition of the angled spine, and 
see \cite[Section 5]{L} for the construction of 
the canonical angled spine arising from $D$. 
From such an angled spine structure, 
the \textit{combinatorial length} of a slope 
on (the peripheral torus of) $K$ can be defined. 
See \cite[Section 4]{L} for its definition. 
Then it follows from \cite[Theorem 4.9]{L} that 
the combinatorial length of the slope $p/q$ on $K$ is at least $|q| \cdot t(D) \cdot \pi/4$. 

On the other hand, also in \cite{L}, 
Lackenby established 
a combinatorial analogue of the so-called ``$2\pi$-theorem" as \cite[Theorem 5.4]{L}. 
The following is a simplified version of his theorem, 
together with the affirmative answer to the Geometrization conjecture: 
Let $K$ be a hyperbolic knot in a closed orientable $3$-manifold, 
and $r$ a slope with combinatorial length more than $2\pi$ 
with respect to some angled spine in the exterior of $K$. 
Then Dehn surgery on $K$ along $r$ 
must yield a closed hyperbolic $3$-manifold. 

Therefore, on a hyperbolic alternating knot $K$ 
with a connected prime alternating diagram $D$ satisfying $t(D) > 4$, 
Dehn surgery along a non-integral slope (i.e., slope $p/q$ with $|q| \ge 2$) 
must yield a closed hyperbolic $3$-manifold, 
that is, such a surgery cannot be exceptional. 
\end{proof}

In the case where $t(D) \le 4$, we have the following: 

\begin{Lemma}\label{lem2}
If a hyperbolic alternating knot $K$ in $S^3$ has 
a connected prime alternating diagram $D$ satisfying $t(D) \le 4$, 
then $K$ must be an arborescent knot. 
\end{Lemma}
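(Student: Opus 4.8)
The plan is to reduce the statement to a purely combinatorial property of the checkerboard (Tait) graph of the diagram, using the classical fact that an alternating link is arborescent precisely when its Tait graph is \emph{series--parallel}, i.e.\ contains no $K_4$ minor. This characterization is implicit in the Bonahon--Siebenmann classification of arborescent links: assembling a link as a tree of rational tangles corresponds exactly to assembling its Tait graph from single edges by the operations of series and parallel composition arranged along a tree, and such graphs are precisely the $K_4$-minor-free ones. Granting this, the entire lemma becomes an edge-count estimate.

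First I would pass from $D$ to its \emph{twist--reduced} diagram $D'$, obtained by replacing each twist of $D$ by a single crossing. Since each twist of $D$ is, by the definition recalled above, a maximal row of bigons or an isolated crossing, the diagram $D'$ is again a connected prime alternating diagram and its number of crossings is exactly $t(D)$. On the level of Tait graphs this is the operation of merging each maximal family of parallel edges into a single edge and suppressing the resulting bivalent vertices; the two operations account respectively for the ``horizontal'' and ``vertical'' twists, equivalently for the two checkerboard colourings, and the fully reduced graph $\tilde G$ then has exactly $t(D)$ edges. Both reductions preserve the presence or absence of a $K_4$ minor, so $\tilde G$ has a $K_4$ minor if and only if the Tait graph of $D$ does, and hence $K$ is arborescent if and only if $\tilde G$ is series--parallel.

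The key step is then immediate. Any graph containing a $K_4$ minor has at least $6$ edges, since the six edges of $K_4$ are realized by six distinct edges of the host graph; thus a graph with fewer than $6$ edges is $K_4$-minor-free. As $t(D) \le 4$, the reduced graph $\tilde G$ has at most $4 < 6$ edges, so it has no $K_4$ minor, is therefore series--parallel, and $K$ is arborescent. (The same count in fact covers $t(D) \le 5$; the threshold $4$ in the hypothesis is dictated by the combinatorial-length estimate used in Lemma~\ref{lem1}, not by this argument.)

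The delicate point, which I would spell out most carefully, is the bookkeeping in the passage $D \mapsto D'$: one must verify that each twist region of $D$ contributes exactly one edge to the fully reduced graph, treating uniformly those twists appearing as parallel edges in one checkerboard graph and those appearing as parallel edges in its planar dual. A more hands-on alternative, bypassing the series--parallel characterization, is to enumerate directly the finitely many twist--reduced prime alternating diagrams with at most four crossings and to exhibit each resulting knot as a rational tangle or a plumbing of rational tangles along a tree; this is routine but tedious, and the graph-theoretic reformulation is what renders the bound transparent.
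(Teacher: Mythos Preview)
Your argument is correct, and it is genuinely different from the paper's proof. The paper proceeds exactly by the ``hands-on alternative'' you mention at the end: it replaces each twist by a fat vertex to obtain a planar $4$-regular graph on at most four vertices, lists all such graphs explicitly (three graphs for $t(D)\le 3$ and three more for $t(D)=4$), and then runs through every way of inserting horizontal or vertical twist boxes, identifying each resulting knot concretely as a two-bridge knot, a length-three Montesinos knot, a four-strand pretzel, or a type~II/III arborescent knot. Your route via the $K_4$-minor-free characterisation of series--parallel Tait graphs bypasses this enumeration entirely and, as you note, even yields the statement for $t(D)\le 5$ for free. The two ingredients you use --- that parallel-edge merging and degree-$2$ suppression preserve the presence of a $K_4$ minor, and that a $K_4$ minor forces at least six edges --- are both sound, so the edge count really does settle the lemma.

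Two remarks. First, the one step that a referee would ask you to pin down is the correspondence ``series--parallel Tait graph $\Longleftrightarrow$ algebraic/arborescent diagram''; the direction you need is the constructive one and is indeed standard Conway--Bonahon--Siebenmann theory, but it deserves a precise citation rather than ``implicit in''. Second, what the paper's enumeration buys, and your argument does not, is the explicit list of which arborescent knots actually occur (two-bridge, Montesinos of length three, specific type~II/III families). In the paper this list is echoed in the subsequent Lemmas~\ref{lem3}--\ref{lem5}; however, since those lemmas are in any case stated for \emph{all} knots of the relevant Wu type, the finer information from the enumeration is not strictly needed for the proof of Theorem~\ref{Thm}, and your shorter argument would suffice.
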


We can actually determine 
all the possible prime alternating diagrams. 

Here we recall definitions of an arborescent knot and its type. 
See \cite{W1} for full details. 
By a \textit{tangle}, we mean 
a pair with a 3-ball and properly embedded arcs. 
From two arcs of rational slope 
drawn on the boundary of a pillowcase-shaped 3-ball, 
one can obtain a tangle, which is called a \textit{rational tangle}. 
A tangle obtained by 
putting rational tangles together in a horizontal way 
is called a \textit{Montesinos tangle}. 
An \textit{arborescent tangle} is then defined as 
a tangle that can be obtained 
by summing several Montesinos tangles 
together in an arbitrary order. 

Suppose that 
a knot $K$ in $S^3$ is obtained by closing a tangle $T$. 
If $T$ is a Montesinos tangle, then we call $K$ a \textit{Montesinos knot}, and 
if $T$ is an arborescent tangle, then we call $K$ an \textit{arborescent knot}. 
For a Montesinos knot, 
the number of rational tangles 
forming the corresponding Montesinos tangle 
is called the \textit{length} of the Montesinos knot. 
We denote by $M(r_1,r_2,\cdots,r_n)$ 
a Montesinos knot constructed from rational tangles 
corresponding to rational numbers $r_1,r_2,\cdots,r_n$. 
In particular, $M(1/q_1, 1/q_2, \cdots, 1/q_n)$ with 
integers $q_1, q_2, \cdots, q_n$ is called a \textit{pretzel knot} of $n$-strands. 

In \cite{W1}, Wu divides all arborescent knots into three types: 
By the \textit{type I} knots, 
we mean two-bridge knots or Montesinos knots of length 3. 
A \textit{bridge index} of a knot in $S^3$ is defined as 
the minimal number of local maxima (or local minima) up to ambient isotopy. 
Thus a knot with bridge index $2$ is called a \textit{two-bridge knot}. 
As remarked before, Menasco showed in \cite{M} that 
a two-bridge knot is hyperbolic unless it is a $(2,p)$-torus knot. 
A knot of \textit{type II} is defined as the union of two Montesinos tangles, 
each of which is formed by two rational tangles 
corresponding to $1/2$ and a non-integer. 
All the other arborescent knots are called of \textit{type III}.

\begin{proof}[Proof of Lemma~\ref{lem2}]
Suppose that a hyperbolic alternating knot $K$ in $S^3$ has 
a connected prime alternating diagram $D$ satisfying $t(D) \le 4$. 
By regarding twists in $D$ (i.e., 
maximal connected collections of bigon regions in $D$ 
or isolated crossings adjacent to no bigon regions) as 
fat vertices, we have a planar embedding of 
a $4$-regular graph with at most four vertices. 
Note that when $t(D)>1$, 
such a graph has no loops 
otherwise the graph $D$ would be non-prime. 

Then we can tabulate all planar embeddings of such graphs 
as illustrated in Figure~\ref{fig1} (the case of $t(D) \le 3$) 
and Figure~\ref{fig2} (the case of $t(D) = 4$). 
This is done by using elementary diagrammatic arguments, and so, we omit the details. 
In the figures, we denote the vertices by small white boxes and the edges by thick lines.

\begin{figure}[htb]
\unitlength 0.1in
\begin{picture}( 10.0000,  6.0000)(  2.0000,-14.0000)
%
\special{pn 8}%
\special{pa 600 1000}%
\special{pa 800 1000}%
\special{pa 800 1200}%
\special{pa 600 1200}%
\special{pa 600 1000}%
\special{fp}%
%
\special{pn 20}%
\special{pa 800 1000}%
\special{pa 1000 800}%
\special{pa 1200 800}%
\special{pa 1200 1400}%
\special{pa 1000 1400}%
\special{pa 800 1200}%
\special{pa 800 1200}%
\special{fp}%
%
\special{pn 20}%
\special{pa 600 1200}%
\special{pa 400 1400}%
\special{pa 200 1400}%
\special{pa 200 800}%
\special{pa 400 800}%
\special{pa 600 1000}%
\special{pa 600 1000}%
\special{fp}%
\end{picture}%
\hspace{1cm}
\unitlength 0.1in
\begin{picture}( 10.0000, 10.0000)(  6.0000,-14.0000)
%
\special{pn 8}%
\special{pa 1400 800}%
\special{pa 1600 800}%
\special{pa 1600 1000}%
\special{pa 1400 1000}%
\special{pa 1400 800}%
\special{fp}%
%
\special{pn 8}%
\special{pa 600 800}%
\special{pa 800 800}%
\special{pa 800 1000}%
\special{pa 600 1000}%
\special{pa 600 800}%
\special{fp}%
%
\special{pn 20}%
\special{pa 800 800}%
\special{pa 1000 600}%
\special{pa 1200 600}%
\special{pa 1400 800}%
\special{pa 1400 800}%
\special{fp}%
%
\special{pn 20}%
\special{pa 1400 1000}%
\special{pa 1200 1200}%
\special{pa 1000 1200}%
\special{pa 800 1000}%
\special{pa 800 1000}%
\special{fp}%
%
\special{pn 20}%
\special{pa 600 800}%
\special{pa 600 600}%
\special{pa 800 400}%
\special{pa 1400 400}%
\special{pa 1600 600}%
\special{pa 1600 800}%
\special{pa 1600 800}%
\special{fp}%
%
\special{pn 20}%
\special{pa 1600 1000}%
\special{pa 1600 1200}%
\special{pa 1400 1400}%
\special{pa 800 1400}%
\special{pa 600 1200}%
\special{pa 600 1000}%
\special{pa 600 1000}%
\special{fp}%
\end{picture}%
\hspace{1cm}
\unitlength 0.1in
\begin{picture}( 18.0000, 10.0000)(  6.0000,-12.0000)
%
\special{pn 8}%
\special{pa 800 800}%
\special{pa 600 800}%
\special{pa 600 600}%
\special{pa 800 600}%
\special{pa 800 800}%
\special{fp}%
%
\special{pn 8}%
\special{pa 1400 600}%
\special{pa 1600 600}%
\special{pa 1600 800}%
\special{pa 1400 800}%
\special{pa 1400 600}%
\special{fp}%
%
\special{pn 8}%
\special{pa 2200 600}%
\special{pa 2400 600}%
\special{pa 2400 800}%
\special{pa 2200 800}%
\special{pa 2200 600}%
\special{fp}%
%
\special{pn 20}%
\special{pa 2200 600}%
\special{pa 2000 400}%
\special{pa 1800 400}%
\special{pa 1600 600}%
\special{pa 1600 600}%
\special{fp}%
%
\special{pn 20}%
\special{pa 1400 600}%
\special{pa 1200 400}%
\special{pa 1000 400}%
\special{pa 800 600}%
\special{pa 800 600}%
\special{fp}%
%
\special{pn 20}%
\special{pa 600 600}%
\special{pa 600 400}%
\special{pa 800 200}%
\special{pa 2200 200}%
\special{pa 2400 400}%
\special{pa 2400 600}%
\special{pa 2400 600}%
\special{fp}%
%
\special{pn 20}%
\special{pa 2200 800}%
\special{pa 2000 1000}%
\special{pa 1800 1000}%
\special{pa 1600 800}%
\special{pa 1600 800}%
\special{fp}%
%
\special{pn 20}%
\special{pa 1400 800}%
\special{pa 1200 1000}%
\special{pa 1000 1000}%
\special{pa 800 800}%
\special{pa 800 800}%
\special{fp}%
%
\special{pn 20}%
\special{pa 600 800}%
\special{pa 600 1000}%
\special{pa 800 1200}%
\special{pa 2200 1200}%
\special{pa 2400 1000}%
\special{pa 2400 800}%
\special{pa 2400 800}%
\special{fp}%
\end{picture}%
\caption{}\label{fig1}
\end{figure}

\begin{figure}[htb]
\unitlength 0.1in
\begin{picture}( 20.0000, 16.3000)(  6.0000,-18.3000)
%
\special{pn 8}%
\special{pa 600 800}%
\special{pa 800 800}%
\special{pa 800 1000}%
\special{pa 600 1000}%
\special{pa 600 800}%
\special{fp}%
%
\special{pn 8}%
\special{pa 1400 800}%
\special{pa 1600 800}%
\special{pa 1600 1000}%
\special{pa 1400 1000}%
\special{pa 1400 800}%
\special{fp}%
%
\special{pn 8}%
\special{pa 2200 400}%
\special{pa 2400 400}%
\special{pa 2400 600}%
\special{pa 2200 600}%
\special{pa 2200 400}%
\special{fp}%
%
\special{pn 8}%
\special{pa 2200 1200}%
\special{pa 2400 1200}%
\special{pa 2400 1400}%
\special{pa 2200 1400}%
\special{pa 2200 1200}%
\special{fp}%
%
\special{pn 20}%
\special{pa 2200 600}%
\special{pa 2000 800}%
\special{pa 2000 1000}%
\special{pa 2200 1200}%
\special{pa 2200 1200}%
\special{fp}%
%
\special{pn 20}%
\special{pa 2400 600}%
\special{pa 2600 800}%
\special{pa 2600 1000}%
\special{pa 2400 1200}%
\special{pa 2400 1200}%
\special{fp}%
%
\special{pn 20}%
\special{pa 1400 800}%
\special{pa 1200 600}%
\special{pa 1000 600}%
\special{pa 800 800}%
\special{pa 800 800}%
\special{fp}%
%
\special{pn 20}%
\special{pa 800 1000}%
\special{pa 1000 1200}%
\special{pa 1200 1200}%
\special{pa 1400 1000}%
\special{pa 1400 1000}%
\special{fp}%
%
\special{pn 20}%
\special{pa 1600 800}%
\special{pa 1800 400}%
\special{pa 2200 400}%
\special{pa 2200 400}%
\special{fp}%
%
\special{pn 20}%
\special{pa 1600 1000}%
\special{pa 1800 1400}%
\special{pa 2200 1400}%
\special{pa 2200 1400}%
\special{fp}%
%
\special{pn 20}%
\special{pa 600 800}%
\special{pa 600 600}%
\special{pa 1000 200}%
\special{pa 2200 200}%
\special{pa 2400 400}%
\special{pa 2400 400}%
\special{fp}%
%
\special{pn 20}%
\special{pa 600 1000}%
\special{pa 600 1200}%
\special{pa 1000 1600}%
\special{pa 2200 1600}%
\special{pa 2400 1400}%
\special{pa 2400 1400}%
\special{fp}%
\put(14.0000,-20.0000){\makebox(0,0)[lb]{(a)}}%
\end{picture}%
\hspace{0.5cm}
\unitlength 0.1in
\begin{picture}( 26.0000, 12.3000)(  6.0000,-14.3000)
%
\special{pn 8}%
\special{pa 800 800}%
\special{pa 600 800}%
\special{pa 600 600}%
\special{pa 800 600}%
\special{pa 800 800}%
\special{fp}%
%
\special{pn 8}%
\special{pa 1400 600}%
\special{pa 1600 600}%
\special{pa 1600 800}%
\special{pa 1400 800}%
\special{pa 1400 600}%
\special{fp}%
%
\special{pn 8}%
\special{pa 2200 600}%
\special{pa 2400 600}%
\special{pa 2400 800}%
\special{pa 2200 800}%
\special{pa 2200 600}%
\special{fp}%
%
\special{pn 20}%
\special{pa 2200 600}%
\special{pa 2000 400}%
\special{pa 1800 400}%
\special{pa 1600 600}%
\special{pa 1600 600}%
\special{fp}%
%
\special{pn 20}%
\special{pa 1400 600}%
\special{pa 1200 400}%
\special{pa 1000 400}%
\special{pa 800 600}%
\special{pa 800 600}%
\special{fp}%
%
\special{pn 20}%
\special{pa 600 600}%
\special{pa 600 400}%
\special{pa 800 200}%
\special{pa 3000 200}%
\special{pa 3200 400}%
\special{pa 3200 600}%
\special{pa 3200 600}%
\special{fp}%
%
\special{pn 20}%
\special{pa 2200 800}%
\special{pa 2000 1000}%
\special{pa 1800 1000}%
\special{pa 1600 800}%
\special{pa 1600 800}%
\special{fp}%
%
\special{pn 20}%
\special{pa 1400 800}%
\special{pa 1200 1000}%
\special{pa 1000 1000}%
\special{pa 800 800}%
\special{pa 800 800}%
\special{fp}%
%
\special{pn 8}%
\special{pa 3000 600}%
\special{pa 3200 600}%
\special{pa 3200 800}%
\special{pa 3000 800}%
\special{pa 3000 600}%
\special{fp}%
%
\special{pn 20}%
\special{pa 3000 600}%
\special{pa 2800 400}%
\special{pa 2600 400}%
\special{pa 2400 600}%
\special{pa 2400 600}%
\special{fp}%
%
\special{pn 20}%
\special{pa 3000 800}%
\special{pa 2800 1000}%
\special{pa 2600 1000}%
\special{pa 2400 800}%
\special{pa 2400 800}%
\special{fp}%
%
\special{pn 20}%
\special{pa 600 800}%
\special{pa 600 1000}%
\special{pa 800 1200}%
\special{pa 3000 1200}%
\special{pa 3200 1000}%
\special{pa 3200 800}%
\special{pa 3200 800}%
\special{fp}%
\put(18.0000,-16.0000){\makebox(0,0)[lb]{(b)}}%
\end{picture}%
\vspace{0.5cm}
\unitlength 0.1in
\begin{picture}( 20.0000, 14.0000)(  2.0000,-18.0000)
%
\special{pn 8}%
\special{pa 600 800}%
\special{pa 800 800}%
\special{pa 800 600}%
\special{pa 600 600}%
\special{pa 600 800}%
\special{fp}%
%
\special{pn 8}%
\special{pa 600 1400}%
\special{pa 800 1400}%
\special{pa 800 1600}%
\special{pa 600 1600}%
\special{pa 600 1400}%
\special{fp}%
%
\special{pn 8}%
\special{pa 1400 600}%
\special{pa 1600 600}%
\special{pa 1600 800}%
\special{pa 1400 800}%
\special{pa 1400 600}%
\special{fp}%
%
\special{pn 8}%
\special{pa 1400 1400}%
\special{pa 1600 1400}%
\special{pa 1600 1600}%
\special{pa 1400 1600}%
\special{pa 1400 1400}%
\special{fp}%
%
\special{pn 20}%
\special{pa 800 600}%
\special{pa 1000 400}%
\special{pa 1200 400}%
\special{pa 1400 600}%
\special{pa 1400 600}%
\special{fp}%
%
\special{pn 20}%
\special{pa 1400 800}%
\special{pa 1200 1000}%
\special{pa 1200 1200}%
\special{pa 1400 1400}%
\special{pa 1400 1400}%
\special{fp}%
%
\special{pn 20}%
\special{pa 800 1400}%
\special{pa 1000 1200}%
\special{pa 1000 1000}%
\special{pa 800 800}%
\special{pa 800 800}%
\special{fp}%
%
\special{pn 20}%
\special{pa 600 800}%
\special{pa 400 1000}%
\special{pa 400 1200}%
\special{pa 600 1400}%
\special{pa 600 1400}%
\special{fp}%
%
\special{pn 20}%
\special{pa 1600 1400}%
\special{pa 1800 1200}%
\special{pa 1800 1000}%
\special{pa 1600 800}%
\special{pa 1600 800}%
\special{fp}%
%
\special{pn 20}%
\special{pa 1600 600}%
\special{pa 1800 600}%
\special{pa 2000 800}%
\special{pa 2000 1400}%
\special{pa 1800 1600}%
\special{pa 1600 1600}%
\special{pa 1600 1600}%
\special{fp}%
%
\special{pn 20}%
\special{pa 1400 1600}%
\special{pa 1200 1800}%
\special{pa 1000 1800}%
\special{pa 800 1600}%
\special{pa 800 1600}%
\special{fp}%
%
\special{pn 20}%
\special{pa 600 1600}%
\special{pa 400 1600}%
\special{pa 200 1400}%
\special{pa 200 800}%
\special{pa 400 600}%
\special{pa 600 600}%
\special{pa 600 600}%
\special{fp}%
\put(22.0000,-12.0000){\makebox(0,0)[lb]{(c)}}%
\end{picture}%
\caption{}\label{fig2}
\end{figure}

By substituting suitable 
vertical or horizontal sequences of crossings into the white boxes, 
we can reconstruct the diagram $D$. 
Thus it suffices to show that 
all the prime connected diagrams obtained 
from the graphs in Figures~\ref{fig1} and \ref{fig2} 
represent arborescent knots. 

We first consider the graphs in Figure~\ref{fig1}. 
Those three graphs are corresponding to 
the cases $t(D) = 1$, $2$, $3$, respectively. 
In the case where $t(D) = 1$ (left in Figure~\ref{fig1}), 
it is clear that the resulting diagrams 
represent the unknot or $(2,p)$-torus knots, 
which are two-bridge knots. 
In the case where $t(D) = 2$ (center in Figure~\ref{fig1}), 
by substituting both vertical or both horizontal sequences of crossings 
into the two white boxes, 
we have the diagrams representing $(2,p)$-torus knots. 
On the other hand, 
if we substitute one vertical and one horizontal sequences of crossings 
into the two white boxes, 
we have the diagrams representing two-bridge knots. 
In the case where $t(D) = 3$ (right in Figure~\ref{fig1}), 
if we substitute three vertical sequences of crossings 
into all the three white boxes, 
then it represents a $3$-strand pretzel knot. 
Otherwise, we see that the diagrams so obtained 
all represent two-bridge knots (possibly $(2,p)$-torus knots). 

We next consider the graphs in Figure~\ref{fig2}. 
Those three graphs are corresponding to the case $t(D) = 4$. 
Note that, from the graph (c), 
we only have non-prime diagrams by substituting sequences of crossings. 
Thus we can ignore it. 

In the graph (a), there are four boxes; left two and right two. 
If we substitute both horizontal sequences of crossings into the left two boxes, 
or both vertical sequences of crossings into the right two boxes, 
then the diagrams so obtained have the twist number at most three, 
and so we can ignore these cases. 
If we substitute vertical and horizontal sequences of crossings 
into the left two and the right two white boxes simultaneously, 
we have the diagrams representing the unions of two rational tangles. 
Such diagrams actually represent two-bridge knots. 
Now suppose that 
we substitute two vertical sequences of crossings 
into the left two white boxes. 
If we further substitute vertical and horizontal sequences of crossings into the right two boxes, 
then we obtain a diagram of a Montesinos knot of length three. 
After taking a mirror image if necessary, 
it is denoted by $M(1/q_1, 1/q_2,1/(q_3 + 1/ q_4) )$ with $q_i > 1$ for $1 \le i \le 4$. 
Finally, if we further substitute two horizontal sequences of crossings into the right two white boxes, 
then we obtain a diagram of an arborescent knot of type II or III. 
In fact, if each one of the two sequences of crossing in the left and right white boxes is a full twist 
(i.e., a pair of crossings), then the knot so obtained is of type II. 
Otherwise all the knots so obtained are of type III. 

In the graph (b), there are four boxes arranged in line. 
If we substitute two horizontal sequences of crossings into the adjacent two boxes, 
then the diagrams so obtained have the twist number at most three, 
and so we can ignore these cases. 
If we substitute vertical and horizontal sequences of crossings alternatively 
into the four white boxes, 
then we have the diagrams representing the unions of two rational tangles. 
Such diagrams actually represent two-bridge knots. 
If we substitute four vertical sequences of crossings into the boxes, 
then we obtain a diagram representing a $4$-strand pretzel knot. 
Finally, it we substitute one 
horizontal sequences of crossings and 
three vertical sequences of crossings into the four boxes, 
then we have a diagram of a Montesinos knot of length three. 
Actually, after taking a mirror image, 
we have a Montesinos knot $M(1/p,1/q,n+1/r)$ with positive integers $p,q,r,n$. 

Consequently we have seen that 
all the prime connected diagrams obtained 
from the graphs in Figures~\ref{fig1} and \ref{fig2} 
represent arborescent knots. 
This completes the proof of Lemma~\ref{lem2}. 
\end{proof}

Thus we can divide the remaining arguments into three cases as follows.

\begin{Lemma}\label{lem3}
On a hyperbolic two-bridge knot in $S^3$, 
all non-trivial exceptional surgeries are integral. 
\end{Lemma}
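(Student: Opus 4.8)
The plan is to rule out, for a hyperbolic two-bridge knot $K$, each of the three types of exceptional surgery along a non-integral slope $p/q$ with $|q|\ge 2$. I would first record that two-bridge knots are \emph{small}, so their exteriors contain no closed essential surface (Hatcher--Thurston); this restricts the possible exceptional fillings and will be used throughout. The reducible case is disposed of immediately, since it is known that two-bridge knots admit no reducible surgery at all. One clean way to see this is through essential laminations: by Delman's construction every hyperbolic two-bridge knot carries essential laminations that persist under Dehn filling, so the filled manifolds are irreducible by Gabai--Oertel; alternatively one may invoke Scharlemann's results on reducible surgeries together with the fact that a hyperbolic knot is not a cable.

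For toroidal surgeries I would appeal to the complete classification of toroidal Dehn surgeries on two-bridge knots obtained by Brittenham--Wu. Their analysis produces, for each hyperbolic two-bridge knot, the exact finite list of slopes whose filling is toroidal, and inspection of that list shows that every such slope is an integer. Hence no surgery along a slope with $|q|\ge 2$ can be toroidal.

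The remaining, and genuinely hardest, case is that of Seifert fibered surgeries, and this is where I expect the main obstacle to lie. The trouble is that a small Seifert fibered space over $S^2$ with three exceptional fibers can have infinite fundamental group and can itself contain essential laminations, so it is not excluded by irreducibility or by finiteness of the fundamental group alone. To handle non-integral slopes here I would use that the persistent laminations supplied by Delman (together with the refinements of Delman--Roberts) are in fact \emph{genuine}, so that a non-integral filling yields a closed $3$-manifold containing a genuine essential lamination. I would then invoke Brittenham's structure theorem for essential laminations in Seifert fibered spaces, namely that every such lamination is isotopic to a vertical or a horizontal one, and observe that a genuine lamination can be neither vertical nor horizontal in a small Seifert fibered space. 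This contradiction forces any Seifert fibered filling slope to be integral, and combining the three cases completes the proof.
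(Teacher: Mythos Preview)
The paper's proof is a single citation: Brittenham and Wu \cite{BW} give a \emph{complete} classification of exceptional surgeries on hyperbolic two-bridge knots (in fact only twist knots admit any, and those slopes are all integers), and the lemma follows by inspection. You already invoke \cite{BW} for the toroidal case, so you have the full classification in hand; splitting into reducible/toroidal/Seifert and re-proving each piece with separate machinery is unnecessary once \cite{BW} is on the table.

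More seriously, your Seifert fibered case has a genuine gap. You assert that Delman's laminations are genuine and that ``a non-integral filling yields a closed $3$-manifold containing a genuine essential lamination,'' but you never explain where non-integrality enters. A lamination that is genuine in the knot exterior need not remain genuine after filling: the attached solid torus can become an $I$-bundle complementary region and destroy genuineness. As written, your argument uses nothing about the slope, so if it were valid it would exclude \emph{all} Seifert fibered surgeries on hyperbolic two-bridge knots---contradicting, for example, the integral small-Seifert surgeries on the figure-eight knot and other twist knots. The missing ingredient is the degeneracy-slope/annulus analysis (as in Brittenham \cite{B}, and used in the paper's proof of Lemma~\ref{lem4}): one must check that Delman's branched surfaces for two-bridge knots have meridional (hence integral) degeneracy locus, or equivalently carry the pair of non-parallel meridional annuli, so that any slope with $|q|\ge 2$ has intersection number at least $2$ with it and the resulting lamination stays genuine. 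Without this step the Seifert case does not go through.
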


\begin{proof}
In fact, in \cite{BW}, 
Brittenham and Wu gave a complete classification of 
the exceptional surgeries on hyperbolic two-bridge knots. 
From this, we can verify that; 
on a hyperbolic two-bridge knot, 
all non-trivial exceptional surgeries are integral. 
In fact, they showed that among two-bridge knots, 
only the twist knots (i.e., the knots with the corresponding partial fraction decomposition of length two) 
can admit non-trivial exceptional surgeries, which are actually all integral. 
\end{proof}

\begin{Lemma}\label{lem4}
On a hyperbolic alternating Montesinos knot of length $3$ in $S^3$, 
all non-trivial exceptional surgeries are integral. 
\end{Lemma}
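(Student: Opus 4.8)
The plan is to run the trichotomy recorded at the beginning of this section: a non-trivial exceptional surgery on $K$ yields a reducible, toroidal, or Seifert fibered manifold, and it suffices to prove that the surgery slope is integral in each of these three cases. Write $K=M(r_1,r_2,r_3)$ for the length-$3$ Montesinos knot under consideration, which by hypothesis is hyperbolic and alternating; recall that by the proof of Lemma~\ref{lem2} the relevant knots are the $3$-strand pretzel knots together with the normal forms $M(1/q_1,1/q_2,1/(q_3+1/q_4))$ and $M(1/p,1/q,n+1/r)$. The reducible case is then immediate from the theorem of Gordon and Luecke that only integral Dehn surgeries on a knot in $S^3$ can yield reducible manifolds, so any reducing slope is automatically an integer.

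For the toroidal and Seifert fibered cases I would appeal to Wu's detailed analysis of Dehn surgery on arborescent and Montesinos knots. His classification of toroidal Dehn surgeries on Montesinos knots attaches to each length-$3$ Montesinos knot an explicit (possibly empty) list of toroidal slopes, and all of these are integers; likewise the classification of Seifert fibered surgeries on Montesinos knots confines the Seifert slopes to a short list of integers. The remaining work is then to match $K$---in each of the normal forms above---against the knots appearing in Wu's tables and to read off that every listed exceptional slope is integral. Combining this with the reducible case completes the proof of Lemma~\ref{lem4}.

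The crux, and the main obstacle, is the toroidal classification step, and the reason it cannot be replaced by a soft persistence-of-surfaces argument is already flagged in the introduction: genuine non-integral (half-integral) toroidal surgeries exist on hyperbolic knots, namely on the knots of Eudave-Mu\~{n}oz. Thus the integrality we want is a real feature of the Montesinos class and must be extracted from the full classification rather than from a general position or boundary-slope count; indeed a toroidal surgery slope need not be a boundary slope, since the punctured torus cut out by the surgery core may fail to be essential. The content of this lemma is therefore to verify that the hyperbolic, length-$3$ Montesinos knots---equivalently, the alternating knots produced in Lemma~\ref{lem2}---sit entirely inside the integral part of Wu's toroidal and Seifert fibered classifications, and in particular are disjoint from the Eudave-Mu\~{n}oz family.
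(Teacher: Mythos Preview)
Your proposal takes a genuinely different route from the paper. The paper does \emph{not} run the reducible/toroidal/Seifert trichotomy at all; instead it invokes Delman's construction of essential laminations in Montesinos knot exteriors, checks (via Thistlethwaite's characterization of alternating Montesinos knots) that the few Montesinos knots excluded by Delman are all non-alternating, and then observes that the laminations so constructed carry two disjoint nonparallel meridional annuli in their complement. Brittenham's criterion then rules out all non-integral exceptional surgeries in one stroke, with no case division on the type of the surgered manifold.

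Your argument, by contrast, has a real gap in the Seifert fibered case. You appeal to ``the classification of Seifert fibered surgeries on Montesinos knots,'' but no such complete classification is available in the literature the paper can cite; indeed the paper explicitly remarks in Section~3 that ``there are no explicit studies on Seifert surgeries on alternating knots,'' and the companion paper \cite{IJM} on exactly this question is listed as \emph{in preparation}. Wu's results in \cite{W1,W4,W5} cover the toroidal case and the type~II/III arborescent knots, but for length-$3$ Montesinos knots (type~I) there is no Seifert-surgery table to read slopes from. So the step ``read off that every listed exceptional slope is integral'' cannot be carried out for Seifert fibered surgeries, and this is precisely the case that forces the lamination argument. (Your reducible and toroidal steps are fine in principle, though note the paper already cites the stronger fact \cite{MT} that hyperbolic alternating knots admit no reducible surgeries at all.)
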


The key ingredient in the proof of this lemma 
is using \textit{essential laminations} in 3-manifolds, 
defined by Gabai and Oertel in \cite{GO} as follows: 
We say a lamination $\lambda$ 
(i.e., a co-dimension one foliation of a closed subset of the ambient manifold) 
is an \textit{essential lamination} in a 3-manifold $M$ 
if it satisfies the following conditions: \\
(i) The inclusion of leaves of $\lambda$ into $M$ induces 
an injection between their fundamental groups.\\
(ii) The complement of $\lambda$ is irreducible.\\
(iii) The lamination $\lambda$ has no sphere leaves.\\
(iv) The lamination $\lambda$ is end-incompressible. \\

More about essential laminations, see \cite{G} for example.

\begin{proof}[Proof of Lemma~\ref{lem4}]
In an unpublished preprint \cite{D2}, 
Delman gave a construction of essential lamination 
in a Montesinos knot exterior. 
See \cite{DR} for a part of his construction. 
Actually if the Montesinos knot we consider has the form $M(1/p,r_1,r_2)$ 
with $p$ and all the denominators of $r_i$'s are all odd, 
then the construction given in \cite{DR} can be applied in our case. 
Also see \cite{D1} for prototype of the construction in \cite{D2}. 
Further detailed explanations of 
his construction for alternating Montesinos knots 
will be appeared in \cite{IJM}. 

In particular, Delman showed in \cite{D2} that all the Montesinos knots, 
not of the form $M (x, 1/p, 1/q)$ where 
$x \in  \{ -1/2n, -1 \pm  1/2n, - 2 + 1/2n \}$ and $p, q$, and $n$ are positive integers, 
admit essential laminations in their exteriors, which survive after all non-trivial Dehn surgeries. 
By the claim given in \cite[Section 4, 2nd paragraph]{T}, 
we see that these exceptions are all non-alternating knots. 
Precisely he claimed that 
a Montesinos knot is alternating if and only if 
its reduced Montesinos diagram, introduced in \cite{LT}, is alternating. 
And actually the reduced Montesinos diagrams of 
the Montesinos knots above are all non-alternating. 
See \cite{T} and \cite{LT} for detail. 

Thus, all the alternating Montesinos knots have 
such essential laminations in the exteriors. 
By examining his construction, we can verify that 
each essential lamination so constructed admits 
at least two disjoint, nonparallel annuli 
properly embedded in the complement of the lamination 
having the following property: 
One boundary component is the meridian of the knot 
and the other lies in some leaf of the lamination. 
Having been shown in \cite[Section 2]{B}, 
the existence of such a pair of annuli guarantees that 
non-trivial non-integral Dehn surgery on the knot cannot be exceptional. 
%
%
Also see \cite[Section 3]{W3} about this arguments. 
\end{proof}

\begin{Lemma}\label{lem5}
On a hyperbolic arborescent knot of type $II$ or $III$ in $S^3$, 
all non-trivial exceptional surgeries are integral. 
\end{Lemma}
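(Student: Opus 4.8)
The plan is to follow exactly the same strategy that succeeded for Montesinos knots of length $3$ in Lemma~\ref{lem4}, namely to produce an essential lamination in the knot exterior together with a pair of ``meridional'' annuli, and then to invoke the annulus criterion to rule out non-integral exceptional surgeries. The key observation is that every arborescent knot of type II or III is in particular an arborescent knot, and Delman's construction in \cite{D2} (as explained in \cite{DR} and \cite{IJM}) produces essential laminations for a very broad class of arborescent knots, not merely Montesinos knots of length $3$. So first I would recall the precise statement of Delman's theorem for arborescent knots, identifying exactly which arborescent knots are excluded as having no such essential lamination.

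Second, I would verify that no hyperbolic \emph{alternating} arborescent knot of type II or III falls into Delman's list of exceptions. Here I expect to reuse the alternating-diagram analysis already carried out in the proof of Lemma~\ref{lem2}: recall that the type II and type III knots in our setting arise only from the specific substitutions into graph (a) of Figure~\ref{fig2}, where the two sequences in the left white boxes are vertical and the two in the right boxes are horizontal. This gives very concrete tangle decompositions, and I would check, as in Lemma~\ref{lem4}, that the corresponding reduced Montesinos/arborescent diagrams in the sense of \cite{LT} are alternating, so by the criterion of \cite{T} these knots avoid the non-alternating exceptions in Delman's classification. The upshot of this step is that every hyperbolic alternating arborescent knot of type II or III carries an essential lamination in its exterior which remains essential after every non-trivial Dehn surgery.

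Third, I would argue that Delman's lamination again admits two disjoint, nonparallel properly embedded annuli in the complement of the lamination, each with one boundary component a meridian of the knot and the other lying in a leaf of the lamination. By \cite[Section 2]{B}, the existence of such a pair of annuli forces every non-trivial non-integral surgery to leave the lamination essential, hence to yield an irreducible manifold with infinite $\pi_1$ that is not a small Seifert fibered space---in particular not an exceptional surgery. Combining this with the integrality conclusion shows all non-trivial exceptional surgeries on such knots are integral.

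The main obstacle I anticipate is the second step: confirming that the annulus property, which \cite{B} and \cite{D2} establish for the Montesinos case, genuinely persists for the more general arborescent laminations of type II and III. For type II knots---unions of two Montesinos tangles each built from a $1/2$ tangle and a non-integer tangle---the lamination is essentially assembled from Montesinos pieces, so the meridional annuli should glue up across the tangle decomposition. Type III is the delicate case, since these are the ``generic'' arborescent knots and the lamination construction is more intricate; here I would lean on the detailed treatment promised in \cite{IJM} and on \cite[Section 3]{W3}, and I would need to check carefully that the branching of Delman's lamination does not obstruct finding two \emph{disjoint, nonparallel} such annuli. If the annulus count ever drops to one, the argument of \cite{B} weakens, so isolating and verifying this combinatorial feature for every admissible type III diagram is where the real work lies.
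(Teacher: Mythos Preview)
Your proposal takes a genuinely different route from the paper, and it carries real gaps.

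The paper's proof is a two-line citation: Wu proved in \cite[Theorem~4.4]{W1} that every non-trivial non-integral surgery on a type~II arborescent knot yields a hyperbolic manifold, and in \cite[Theorem~3.6]{W1} that every non-trivial surgery on a type~III arborescent knot yields a hyperbolic manifold. That is the entire argument---the lemma is a direct corollary of Wu's classification, with no laminations, no annuli, and no restriction to the alternating case.

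Your plan, by contrast, tries to push Delman's lamination construction from \cite{D2} beyond Montesinos knots to general type~II and type~III arborescent knots. There are two concrete problems. First, Delman's results in \cite{D1,D2,DR} are stated for two-bridge and Montesinos knots; the assertion that the construction ``produces essential laminations for a very broad class of arborescent knots'' is not something you can simply cite, and assembling laminations across a general arborescent tangle sum (especially for type~III) is exactly the step you flag as ``where the real work lies.'' Second, even granting a lamination, the two-disjoint-annuli property needed for Brittenham's criterion \cite{B} is verified in the literature only in the Montesinos setting; you would have to prove it afresh, and you acknowledge this may fail. So the proposal is at best a program, not a proof.

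There is also a scope mismatch: the lemma as stated concerns \emph{all} hyperbolic arborescent knots of type~II or~III, not just alternating ones, whereas your argument leans on the alternating-diagram analysis of Lemma~\ref{lem2} to rule out Delman's exceptions. Even if completed, your approach would prove a weaker statement than the one claimed. The fix is simply to invoke Wu's theorems from \cite{W1}.
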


\begin{proof}
In \cite[Theorem 4.4]{W1}, Wu showed that; 
all non-trivial non-integral surgeries on arborescent knots of type II 
give hyperbolic manifolds. 
Equivalently, an arborescent knot of type II has only integral non-trivial exceptional surgeries. 
Also, in \cite[Theorem 3.6]{W1}, Wu showed that; 
all non-trivial surgeries on arborescent knots of type III give hyperbolic manifolds. 
Equivalently, an arborescent knot of type III has no non-trivial exceptional surgeries. 
\end{proof}

\begin{proof}[Proof of Theorem~\ref{Thm}]
Let $K$ be a hyperbolic alternating knot in the $3$-sphere. 
By Lemma~\ref{lem1}, 
if $K$ has a connected prime alternating diagram $D$ satisfying $t(D) > 4$, 
then all non-trivial non-integral surgeries on $K$ give hyperbolic manifolds. 
Otherwise, by Lemma~\ref{lem2}, $K$ must be an arborescent knot. 
Following Wu \cite{W1}, we divide arborescent knots into of type I, II, or III, 
and furthermore, 
divide type I knots into two-bridge knots or Montesinos knots of length three. 
Then we obtain that; 
all non-trivial exceptional surgeries are integral 
on a hyperbolic two-bridge knot by Lemma~\ref{lem3}, 
on a Montesinos knot of length three by Lemma~\ref{lem4}, 
and on an arborescent knot of type II or III by Lemma~\ref{lem5}, respectively. 
This completes the proof. 
\end{proof}

\section{Remarks}

In this section, we collect some known facts about reducible/toroidal surgeries on alternating knots. 
As far as the author knows, 
there are no explicit studies on Seifert surgeries on alternating knots.

\subsection{}\label{AltTor}

We begin with considering reducible surgeries on alternating knots. 

In fact, 
it is shown by Menasco and Thistlethwaite in \cite[Corollary 1.1]{MT} that 
no Dehn surgeries on a hyperbolic alternating knot in $S^3$ 
yield reducible manifolds. 
We here include an outline of their arguments for completeness. 
They studied essential surfaces 
(i.e., incompressible and boundary-incompressible) 
properly embedded in alternating knot exteriors, and established 
that the following holds for an essential surface $F$; 
\begin{equation}\label{MT}
- \chi (F) \ge \dfrac{1}{8} b \beta (n+2) , 
\end{equation}
where $b$ denotes the denominator of the boundary slope of $F$, 
$\beta$ the number of boundary components of $F$, and 
$n$ the twist-crossing number of the standard diagram of the knot. 
Thus if $F$ is of genus $0$, then 
$ \beta - 2 \ge \dfrac{1}{8} b \beta (n+2) $. 
From $\beta \ge 1$ and $b \ge 1$, this implies that $n \le 5$. 
However, in \cite{MT}, they determined 
alternating knots with twist-crossing number up to five 
(\cite[Figure 1.2]{MT}), and they are in fact all two-bridge knots. 
For two-bridge knots, it is shown by Hatcher and Thurston 
in \cite[Theorem 2(a)]{HT} that 
all non-trivial two-bridge knots other than $(2,p)$-torus knots 
have no reducible surgery. 

In general, F. Gonz\`{a}lez-Acu\~{n}a and H. Short conjectured in \cite{GS} that 
the only way to get a reducible 3-manifold by surgery on a knot in $S^3$ 
is to surger on a cable knot along the slope determined by the cabling annulus. 
This is now called the Cabling Conjecture, and still remaining open. 
See \cite[Problem 1.79]{K}.

\subsection{}\label{32}

We next consider toroidal surgeries on alternating knots. 
Such surgeries are actually completely classified as follows. 

In \cite{P}, based on the result obtained in \cite{MT}, 
Patton claimed that if an alternating knot admits a toroidal surgery, 
then it is either a 2-bridge knot or a $3$-strand pretzel knot. 
This is also achieved as \cite[Lemmas 3.1 and 3.3]{BoyerZhang}. 
We here include an outline of Patton's argument for completeness. 
Note that 
if a hyperbolic knot in $S^3$ yields a toroidal surgery, 
then the surgery slope actually becomes a boundary slope of 
an essential punctured torus. 
Then, from Equation~\eqref{MT}, we see that 
the denominator $b$ of the boundary slope of the punctured torus is at most 2. 
However, if $b=2$, then the twist-crossing number must be at most 2. 
Then we see that such knots are only $(2,p)$-torus knots, 
which are non-hyperbolic. 
If $b=1$, from Equation~\eqref{MT}, we see that 
the twist-crossing number of 
a hyperbolic alternating knot admitting toroidal surgery is at most 6. 
In \cite[Section 1]{P}, Patton asserts that 
a hyperbolic alternating knot with twist-crossing number at most 6 
must be a two-bridge knot or a Montesinos knot of length $3$. 
In the former case, in \cite[Section 2]{P}, 
by using the machinery developed in \cite{HT}, 
he showed that 
the 2-bridge knots have Conway forms of length two, 
i.e., each of the knots is either of genus one, 
or bounds a once punctured Klein bottle.
Also see \cite{BW}. 
In the latter case, he studied in detail in \cite[Section 3]{P}. 
He used the machinery developed in \cite{HO}, and showed that 
the Montesinos knots must be $3$-strand pretzel knots. 
Again, each of such knots is either of genus one, 
or bounds a once punctured Klein bottle.

On general Montesinos knots, 
in \cite{W4} together with his previous results, 
Wu gave a complete classification of toroidal surgeries. 
Furthermore, he also gave a complete classification 
of toroidal surgeries on large arborescent knots, in \cite{W5}.

Also remark that Gordon and Luecke proved in \cite{GL} that 
if a hyperbolic knot in $S^3$ admits a non-integral toroidal surgery, 
then the knot is one explicitly given by Eudave-Mu\~{n}oz in \cite{E}.

\bibliographystyle{amsplain}

\begin{thebibliography}{99}

\bibitem{A}
I. Agol, 
\textit{Bounds on exceptional Dehn filling}, 
Geom. Topol. \textbf{4} (2000), 431--449.

\bibitem{A2}
I. Agol,
\textit{Bounds on exceptional Dehn filling II},
preprint, arXiv:0803.3088

\bibitem{BoyerZhang}
S. Boyer and X. Zhang, 
\textit{Cyclic surgery and boundary slopes}, 
Geometric topology (Athens, GA, 1993), 62--79, 
AMS/IP Stud. Adv. Math., 2.1, Amer. Math. Soc., Providence, RI, 1997. 

\bibitem{B}
M. Brittenham, 
\textit{Exceptional Seifert-fibered spaces and Dehn surgery on 2-bridge knots}, 
Topology \textbf{37} (1998), 665--672. 

\bibitem{BW}
M. Brittenham and Y.-Q. Wu, 
\textit{The classification of Dehn surgery on 2-bridge knots}, 
Comm. Anal. Geom. \textbf{9} (2001), 97--113. 


\bibitem{D1}
C. Delman, 
\textit{Essential laminations and Dehn surgery on 2-bridge knots}, 
Topology and its Appl. \textbf{63} (1995), 201--221. 

\bibitem{D2}
C. Delman, 
\textit{Constructing essential laminations which survive all Dehn surgeries}, 
preprint. 

\bibitem{DR}
C. Delman and R. Roberts, 
\textit{Alternating knots satisfy strong property P}, 
Comm. Math. Helv. \textbf{74} (1999) 376--397.

\bibitem{E}
M. Eudave-Mu\~{n}oz, 
\textit{Non-hyperbolic manifolds obtained by Dehn surgery on hyperbolic knots}, 
Geometric topology (Athens, GA, 1993), 35--61, 
AMS/IP Stud. Adv. Math., 2.1, Amer. Math. Soc., Providence, RI, 1997. 

\bibitem{G}
D Gabai, 
\textit{Problems in Foliations and Laminations}, 
Geometric topology (Athens, GA, 1993), 1--33, 
AMS/IP Stud. Adv. Math., 2.2, Amer. Math. Soc., Providence, RI, 1997.

\bibitem{GO}
D. Gabai and U. Oertel, 
\textit{Essential laminations in 3-manifolds}, 
Ann. of Math. (2) \textbf{130} (1989), no. 1, 41--73.

\bibitem{GS}
F. Gonz\`{a}lez-Acu\~{n}a and H. Short, 
\textit{Knot surgery and primeness}, 
Math. Proc. Camb. Phil. Soc. \textbf{99} (1986), 89--102. 

\bibitem{GL}
C.McA. Gordon and J. Luecke, 
\textit{Non-integral toroidal Dehn surgeries}, 
Comm. Anal. Geom. \textbf{12} (2004), 417--485.

\bibitem{HO}
A. Hatcher and U. Oertel, 
\textit{Boundary slopes for Montesinos knots}, 
Topology \textbf{28} (1989), 453--480. 

\bibitem{HT}
A. Hatcher and W. Thurston, 
\textit{Incompressible surfaces in 2-bridge knot complements}, 
Inventiones Math. \textbf{79} (1985), 225--246. 


\bibitem{I}
K. Ichihara,
\textit{Integral non-hyperbolike surgeries}, 
J. Knot Theory Ramifications \textbf{17} (2008), no.3, 257--261.
Preprint version available at arXiv:math.GT/0410045. 

\bibitem{IJM}
K. Ichihara, In Dae Jong and S. Mizushima, 
\textit{Seifert fibered surgeries on alternating Montesinos knots}, 
in preparation. 

\bibitem{K} 
\textit{Problems in low-dimensional topology}, 
Edited by Rob Kirby. 
AMS/IP Stud. Adv. Math., 2.2, Geometric topology (Athens, GA, 1993), 35--473, 
Amer. Math. Soc., Providence, RI, 1997. 

\bibitem{L}
M. Lackenby, 
\textit{Word hyperbolic Dehn surgery}, 
Invent. Math. \textbf{140} (2000), no.2, 243--282.

\bibitem{LM}
M. Lackenby and R. Meyerhoff, 
\textit{The maximal number of exceptional Dehn surgeries}, 
preprint, arXiv:0808.1176


\bibitem{LT}
W.B.R. Lickorish and M.B. Thistlethwaite, 
\textit{Some links with non-trivial polynomials and their crossing-numbers}, 
Comment. Math. Helvetici \textbf{63} (1988), 527--539.

\bibitem{M}
W. Menasco, 
\textit{Closed incompressible surfaces in alternating knot and link complements}, 
Topology \textbf{23} (1984), 37--44. 


\bibitem{MT}
W. Menasco and M. Thistlethwaite, 
\textit{Surfaces with boundary in alternating knot exteriors}, 
J. Reine Angew. Math. \textbf{426} (1992), 47--65. 

\bibitem{P}
R. Patton, 
\textit{Incompressible punctured tori in the complements of alternating knots}, 
Math. Ann. \textbf{301} (1995), 1--22. 

\bibitem{P1}
G. Perelman, 
\textit{The entropy formula for the Ricci flow and its geometric applications}, 
preprint, available at arXiv:math.DG/0211159. 

\bibitem{P2}
G. Perelman, 
\textit{Ricci flow with surgery on three-manifolds},
preprint, available at arXiv:math.DG/0303109. 

\bibitem{P3}
G. Perelman, 
\textit{Finite extinction time for the solutions to the Ricci flow on certain three-manifolds}, 
preprint, available at arXiv:math.DG/0307245. 

\bibitem{R} 
D. Rolfsen, 
\textit{Knots and Links}, 
Publish or Perish, Berkeley, Ca, 1976.

\bibitem{S}
P. Scott, 
\textit{The geometries of 3-manifolds}, 
Bull. Lond. Math. Soc. \textbf{15} (1983), 401--487.

\bibitem{T}
M.B. Thistlethwaite, 
\textit{On the algebraic part of an alternating link}, 
Pacific J. Math. \textbf{151} (1991), no.2, 317--333. 

\bibitem{Th}
W.P. Thurston, 
\textit{The geometry and topology of three-manifolds}, 
notes, Princeton University, Princeton, 1980; 
available at \texttt{http://msri.org/publications/books/gt3m}

\bibitem{Th2}
W.P. Thurston, 
\textit{Three dimensional manifolds, Kleinian groups and hyperbolic geometry}, 
Bull. Amer. Math. Soc. \textbf{6} (1982), 357--381.  

\bibitem{W1}
Y-Q. Wu, 
\textit{Dehn surgery on arborescent knots}, 
J. Diff. Geom. \textbf{43} (1996), 171--197. 


\bibitem{W3}
Y-Q. Wu, 
\textit{Dehn surgery on arborescent knots and links--a survey}, 
Chaos Solitons Fractals \textbf{9} (1998) 671--679. 
Preprint version available at arXiv:math.GT/9704221. 

\bibitem{W4}
Y-Q. Wu, 
\textit{The classification of toroidal Dehn surgeries on Montesinos knots}, 
to appear in Topology, 
preprint version available at math.GT/0610870.

\bibitem{W5}
Y-Q. Wu, 
\textit{Exceptional Dehn surgery on large arborescent knots}, 
preprint available at math.GT/0610871.

\end{thebibliography}

\end{document}